\newtheorem{thm}{Theorem}
\newtheorem{prop}{Proposition}
\theoremstyle{remark}
\newtheorem*{remark}{Remark}
\begin{document}

\title{Monopole Floer homology and the spectral geometry of three-manifolds} 

\author{Francesco Lin}
\address{Department of Mathematics, Princeton University and School of Mathematics, Institute for Advanced Study} 
\email{fl4@math.princeton.edu}

\begin{abstract}
We refine some classical estimates in Seiberg-Witten theory, and discuss an application to the spectral geometry of three-manifolds.
In particular, we show that on a rational homology three-sphere $Y$, for any Riemannian metric the first eigenvalue of the laplacian on coexact one-forms is bounded above explicitly in terms of the Ricci curvature, provided that $Y$ is not an $L$-space (in the sense of Floer homology). The latter is a purely topological condition, and holds in a variety of examples. Performing the analogous refinement in the case of manifolds with $b_1>0$, we obtain a gauge-theoretic proof of an inequality of Brock and Dunfield relating the Thurston and $L^2$ norms of hyperbolic three-manifolds, first proved using minimal surfaces.
\end{abstract}

\maketitle

We discuss an application of gauge theory and Floer homology to spectral geometry, and in particular to the study of the spectrum of the Hodge laplacian acting on differential forms on a compact Riemannian manifold. A classical upper bound can be provided in terms of the sectional curvatures, their covariant derivatives and the injectivity radius (\cite{Dod}), while some lower bounds can be exhibited in special cases using the Mayer-Vietoris arguments introduced in \cite{McG} (see also \cite{GP}). In the present paper we focus on three-manifolds. By the Hodge decomposition, in this case the spectrum of the Laplacian on forms is determined by the one on functions (which is somewhat well-understood) and the one on coexact $1$-forms.
Given a rational homology sphere $Y$, we will provide an upper bound on the lowest eigenvalue $\lambda^*_1$ on coexact $1$-forms purely in terms of the Ricci curvature, provided an extra topological assumption (which is gauge-theoretical in nature). In the following, we denote by $\tilde{s}(p)$ the sum of the two least eigenvalues of the Ricci curvature at the point $p$.
\begin{thm}\label{spectrum}
Let $Y$ be a rational homology sphere which is not an $L$-space. Then for every Riemannian metric on $Y$ the upper bound
\begin{equation}\label{top}
\lambda^*_1\leq -\mathrm{inf}_{p\in Y}\tilde{s}(p)
\end{equation}
holds. In particular, a lower bound on the Ricci curvature implies an upper bound on $\lambda_1^*$.
\end{thm}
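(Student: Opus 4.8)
The plan is to prove the contrapositive: assuming the conclusion fails, i.e. $\lambda_1^* > -\inf_{p}\tilde{s}(p)$ — equivalently $\lambda_1^* + \tilde s(p) > 0$ for every $p \in Y$ — I would show that $Y$ must be an $L$-space. This recasts the statement as a refinement of the classical fact that a rational homology sphere of positive scalar curvature is an $L$-space: the strict positivity of the scalar curvature is weakened to the pointwise inequality $\lambda_1^* + \tilde s > 0$, in which a possibly negative Ricci contribution is compensated by the first coexact eigenvalue.

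First I would fix the torsion spin-c structure and work with the Seiberg-Witten equations relative to the unique flat reducible $A_0$. For an irreducible solution $(A,\Phi)$ written in Coulomb gauge as $A = A_0 + a$, the one-form $a$ is coexact (since $b_1(Y)=0$), so $\|a\|^2 \leq (\lambda_1^*)^{-1}\|da\|^2$, while the curvature equation identifies $da$ with a quadratic expression in $\Phi$. I would then feed this into a refined Bochner-Weitzenb\"ock analysis: in dimension three the pointwise algebra of the spinor lets one replace the scalar curvature appearing in the Lichnerowicz formula by the sum $\tilde s$ of the two smallest Ricci eigenvalues, and pairing the resulting identity with $\Phi$ and integrating produces, after using the coexact eigenvalue inequality to absorb the gauge-theoretic curvature term, an integral relation whose leading term is $\int_Y (\lambda_1^* + \tilde s)\,|\Phi|^2$ with the remaining contributions nonnegative. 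Under the standing hypothesis this coefficient is strictly positive, forcing $\Phi \equiv 0$ and contradicting irreducibility; hence there are no irreducible solutions.

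The remaining step is the Floer-theoretic translation. Since the reduced monopole Floer homology of a rational homology sphere is accounted for by irreducible critical points, the absence of irreducible solutions (for a sufficiently small admissible perturbation, to which the a priori estimate is robust) forces $HM^{red} = 0$, which is precisely the statement that $Y$ is an $L$-space, contradicting the hypothesis.

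I expect the main obstacle to be twofold. The analytic heart is the refined Weitzenb\"ock step: producing exactly the sum of the two least Ricci eigenvalues, rather than the full scalar curvature, requires exploiting the structure of a three-dimensional spinor to isolate the two-plane on which the relevant curvature acts, and then combining this with the coexact eigenvalue estimate so that the two separate curvature contributions assemble into the single coefficient $\lambda_1^* + \tilde s$. The second delicate point is quantitative control of the perturbation: one must ensure that the a priori estimate, and hence the vanishing of irreducible solutions, persists for all perturbations small enough to keep the Floer homology well-defined, so that the contradiction with $HM^{red}\neq 0$ is genuine rather than an artifact of passing to an unperturbed limit.
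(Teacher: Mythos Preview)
Your overall architecture --- prove the contrapositive, show the unperturbed Seiberg--Witten equations admit no irreducible solutions, pass to a small admissible perturbation, conclude $Y$ is an $L$-space --- matches the paper exactly, and your flagging of the perturbation step as a genuine (if routine) point is apt.

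The analytic core, however, is misaimed. You propose to apply the coexact eigenvalue inequality to the connection one-form $a = A - A_0$ in Coulomb gauge, and separately to extract $\tilde s$ from the Lichnerowicz formula by ``pointwise algebra of the spinor.'' Neither step works as stated. The Lichnerowicz identity on spinors genuinely contains $s/4$; no pointwise manipulation of a single spinor replaces this by the sum of the two smallest Ricci eigenvalues. And the bound $\|a\|^2 \le (\lambda_1^*)^{-1}\|da\|^2$ controls $\|a\|^2$ from above by $\int|\Phi|^4$, but $\|a\|^2$ does not appear in the integrated Weitzenb\"ock identity with a useful sign (or at all), so there is no evident way to turn this into a positive $\lambda_1^*$-contribution paired with $|\Phi|^2$.

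The paper's mechanism applies the coexact estimate and the Bochner formula for one-forms not to $a$ but to the spinor bilinear $\xi = \rho^{-1}(\Psi\Psi^*)_0$, which is coclosed because $\ast\xi$ is a multiple of the closed form $F_{B^t}$. The Bochner identity $(d+d^*)^2\xi = \nabla^*\nabla\xi + \mathrm{Ric}(\xi,\cdot)$ gives $\int |\nabla\xi|^2 \ge \int (\lambda_1^* - m)\,|\xi|^2$, with $m$ the largest Ricci eigenvalue. Feeding this into the integrated spinor Weitzenb\"ock identity (multiplied by $|\Psi|^2$) via the pointwise bounds $|\nabla\xi|^2 \le |\Psi|^2|\nabla_B\Psi|^2$ and $|\xi|^2 = \tfrac14|\Psi|^4$, the $s$ from the spinor side and the $-m$ from the one-form side combine to $\tilde s = s - m$, yielding
\[
\int |\Psi|^6 + \tfrac12(\lambda_1^* + \tilde s)\,|\Psi|^4 \;\le\; 0.
\]
So $\tilde s$ does not come from spinor algebra but from the interaction of two different Weitzenb\"ock formulas, and the one-form carrying the $\lambda_1^*$-information is the quadratic expression $\xi$, not the connection.
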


An $L$-space $Y$ is a rational homology sphere $Y$ for which the reduced monopole Floer homology $\mathit{HM}(Y)$ vanishes (see \cite{KM}). This condition is a topological invariant. An alternative definition is the vanishing of the reduced Heegaard Floer homology $HF^{red}(Y)$ (\cite{OS1}): these conditions are equivalent via the isomorphism between the two theories (see \cite{HFHM1}, \cite{CGH} and subsequent papers). Examples of $L$-spaces include spherical space forms (\cite{KM}) and branched double covers of alternating knots (\cite{OSBr}). In general, the condition of being an $L$-space is quite well understood, and algorithmically computable (\cite{SW}). Among rational homology spheres which are \textit{not} $L$-spaces (so that the main result of the paper applies to them) we have the following examples:
\begin{itemize}
\item any \textit{integral} homology sphere obtained by a surgery on a knot in $S^3$ other than $S^3$ itself and the Poincar\'e homology sphere (\cite{Ghi});
\item any rational homology sphere obtained by surgery on a knot $K$ is $S^3$ such that the Alexander polynomial of $K$ has a coefficient different than $\pm1$ (\cite{OSL}). More in general, there are many restrictions on knots which admit an $L$-space surgery (see for example \cite{Hed}).
\item any rational homology sphere that admits a taut foliation (\cite{KMOS}).
\end{itemize}
An intriguing conjecture (\cite{BGW}) states that a rational homology sphere is an $L$-space if and only if it does not admit a taut foliation, if and only if its fundamental group is not left-orderable. This has been recently verified for graph manifolds (\cite{HRRW}).

\vspace{0.5cm}
To prove the main result, we refine some well-known estimates for the solutions of the Seiberg-Witten equations. This is inspired by the estimates in the four dimensional case involving the self-dual Weyl curvature discussed in \cite{LeB}. The key idea in our case is to exploit the classical Bochner formula connecting the Hodge Laplacian and the Bochner Laplacian on $1$-forms in terms of the Ricci curvature.
\begin{proof}[Proof of Theorem \ref{spectrum}]
We follow the conventions of \cite{KM}. A sufficient condition for a rational homology sphere $Y$ to be an $L$-space is the existence of a pair of metric and perturbation which is admissible and for which the Seiberg-Witten equations
\begin{align*}
\frac{1}{2}\rho(F_{B^t})-(\Psi\Psi^*)_0&=0\\
D_B\Psi&=0
\end{align*}
do not have irreducible solutions. Let $(B,\Psi)$ be any solution. Then
\begin{equation}\label{lap}
\Delta|\Psi|^2=2\langle \Psi, \nabla_B^*\nabla_B \Psi\rangle -2 |\nabla_B\Psi |^2=-|\Psi|^4-\frac{1}{2}s|\Psi|^2-2|\nabla_B\Psi|^2,
\end{equation}
where we used the Weitzenb\"ock identity

\begin{equation*}
D_B^2\Psi=\nabla_B^*\nabla_B\Psi+\frac{1}{2}\rho(F_{B^t})\cdot \Psi+\frac{s}{4}\Psi.
\end{equation*}
We can now multiply (\ref{lap}) by $|\Psi|^2$, integrate over the manifold, and obtain by Green's identity
\begin{equation}\label{green}
\int |\Psi|^6+\frac{1}{2}s|\Psi|^4+2|\Psi|^2|\nabla_B\Psi|^2=-\int |\Psi|^2\Delta|\Psi|^2=-\int \lvert d |\Psi|^2\rvert^2\leq 0.
\end{equation}
The key idea is to get a better bound on the third term on the left hand side. To do this, recall that for a $1$-form $\xi$ the classical Bochner identity
\begin{equation*}
(d+d^*)^2\xi=\nabla^*\nabla\xi +\mathrm{Ric}(\xi,\cdot)
\end{equation*}
holds, see for example \cite{Bes}. Suppose now that $\xi$ is coclosed. Integrating by parts, we obtain the inequality
\begin{equation}\label{boch}
\int |\nabla\xi|^2=\int|(d+d^*)\xi|^2+\int -\mathrm{Ric}(\xi,\xi)\geq\int(\lambda^{*}_1- m)|\xi|^2,
\end{equation}
where $m(p)$ is the maximum eigenvalue of the Ricci curvature at $p$, and we consider the variational definition of the first eigenvalue: as $b_1(Y)=0$, there are not non-trivial harmonic $1$-forms, so that we have
\begin{equation}\label{ineqlambda}
\int|(d+d^*)\xi|^2\geq \lambda^*_1\int|\xi|^2.
\end{equation}
We now apply this last inequality to the $1$-form $\xi=\rho^{-1}(\Psi\Psi^*)_0$, which is coclosed because its Hodge star is a multiple of the curvature (recall that on a three-manifold, for any $1$-form we have $\rho(\alpha)=-\rho(\ast\alpha)$). For this $1$-form, recalling that we are using the inner product on $i\mathfrak{su}(2)$ given by $\mathrm{tr}(a^*b)/2$ (which makes $\rho$ an isometry), we have
\begin{equation*}
|\nabla\xi|^2\leq |\Psi|^2|\nabla_B\Psi|^2,\quad |\xi|^2=\frac{1}{4}|\Psi|^4,
\end{equation*}
hence substituting in (\ref{green}) we obtain
\begin{equation*}
\int|\Psi|^6+\frac{1}{2}(\lambda_1^{*}+\tilde{s})|\Psi|^4\leq 0.
\end{equation*}
Here be definition $\tilde{s}=s-m$. Now, if we assume that $\lambda_1^{*}+\tilde{s}$ is non-negative, $\Psi$ is forced to be identically zero, hence the Seiberg-Witten equations do not have irreducible solutions. As the quantity $\lambda_1^{*}+\tilde{s}$ is by assumption everywhere strictly positive, the same result holds for a small admissible perturbation of the equations (as those constructed in \cite{KM}), so that $Y$ is an $L$-space.
\end{proof}
\begin{remark}Of course (\ref{top}) the does not hold for spherical space forms, but one can also construct an example of Riemannian three-manifold with $\mathrm{inf}\tilde{s}<0$ for which (\ref{top}) does not hold. For example, the Hantzsche-Wendt manifold is the unique rational homology three-sphere admitting a flat metric. We can choose a small perturbation of the metric for which $\mathrm{inf}\tilde{s}<0$. As $\lambda_1^*$ is a Lipshitz function of the metric, if the perturbation is small enough (\ref{top}) will still be false.
\end{remark}

\vspace{1cm}

The present paper is motivated by the following interesting result in \cite{BD}, which was in turn inspired by \cite{BSV}. Recall that the first cohomology $H^1(Y;\mathbb{R})$ of an oriented hyperbolic three-manifold $Y$ comes with two natural norms: the Thurston norm $\|\cdot \|_{Th}$ (\cite{Thu}), and the harmonic norm with respect to the hyperbolic metric $\|\cdot\|_{L^2}$.

\begin{thm}\label{hypnorm}[Theorem $1.3$ of \cite{BD}]Given a closed oriented hyperbolic three-manifold $Y$, the inequality
\begin{equation}\label{Thvol}
\frac{\pi}{\sqrt{\mathrm{vol}(Y)}}\|\cdot \|_{Th}\leq \|\cdot\|_{L^2}
\end{equation}
holds.
\end{thm}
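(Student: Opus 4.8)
The plan is to rerun the refinement from the proof of Theorem~\ref{spectrum}, now for the hyperbolic metric $g$ on $Y$ (which has $b_1(Y)>0$), and then to pair the resulting $L^2$ bound on the curvature against the class $\phi$. It suffices to treat a primitive integral class $\phi\in H^1(Y;\mathbb{Z})$, since both sides of (\ref{Thvol}) are continuous and homogeneous; I will write $\omega_\phi$ for the $g$-harmonic representative of $\phi$, so that $\|\phi\|_{L^2}=\|\omega_\phi\|_{L^2}$. The gauge-theoretic input I would supply is that monopole Floer homology detects the Thurston norm (\cite{KM}): there is a spin$^c$ structure $\mathfrak s$ with $\mathit{HM}(Y,\mathfrak s)\ne 0$ realizing $\|\phi\|_{Th}=\langle c_1(\mathfrak s),\mathrm{PD}(\phi)\rangle$. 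Because this number is nonzero, $c_1(\mathfrak s)$ is non-torsion, so a flat determinant connection is impossible and the unperturbed Seiberg--Witten equations in $\mathfrak s$ have no reducible solutions. Nonvanishing of $\mathit{HM}(Y,\mathfrak s)$ produces solutions for every small admissible perturbation of the hyperbolic metric; I would then let the perturbation tend to zero and extract, by the usual compactness, an irreducible solution $(B,\Psi)$ of the unperturbed equations for $g$ (irreducibility is automatic, since $\Psi\equiv 0$ would force $F_{B^t}\equiv 0$ and hence $c_1(\mathfrak s)=0$).

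Next I would establish the refined integral bound. For the hyperbolic metric $s\equiv -6$ and $\mathrm{Ric}\equiv -2g$, so the coclosed $1$-form $\xi=\rho^{-1}(\Psi\Psi^*)_0$ (with $|\xi|^2=\tfrac14|\Psi|^4$ and $*\xi$ a multiple of $F_{B^t}$) satisfies, via the pointwise inequality $|\nabla\xi|^2\le|\Psi|^2|\nabla_B\Psi|^2$ and the Bochner identity (\ref{boch}) with $-\mathrm{Ric}(\xi,\xi)=2|\xi|^2$, the bound $\int|\Psi|^2|\nabla_B\Psi|^2\ge\int|\nabla\xi|^2\ge\tfrac12\int|\Psi|^4$. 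Feeding this into (\ref{green}) improves the middle coefficient and yields $\int|\Psi|^6\le 2\int|\Psi|^4$, whence H\"older's inequality gives $\int_Y|\Psi|^4\le 4\,\mathrm{vol}(Y)$ and therefore $\int_Y|\xi|^2\le\mathrm{vol}(Y)$. It is precisely this gain over the crude pointwise bound $|\Psi|^2\le 3$ (which would only yield the weaker constant $2\pi/3$) that is responsible for the sharp constant $\pi$.

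Finally I would combine the two steps. Writing $F_{B^t}=-2*\xi$, so that $\tfrac{i}{2\pi}F_{B^t}$ represents $c_1(\mathfrak s)$, the pairing becomes
\[
\|\phi\|_{Th}=\left|\int_Y \tfrac{i}{2\pi}F_{B^t}\wedge\omega_\phi\right|=\frac{1}{\pi}\left|\int_Y\langle\xi,\omega_\phi\rangle\,\mathrm{dvol}\right|.
\]
As $\omega_\phi$ is harmonic only the harmonic part of $\xi$ contributes, so Cauchy--Schwarz together with the bound of the previous step gives $\|\phi\|_{Th}\le\tfrac1\pi\|\xi\|_{L^2}\|\omega_\phi\|_{L^2}\le\tfrac1\pi\sqrt{\mathrm{vol}(Y)}\,\|\phi\|_{L^2}$, which is exactly (\ref{Thvol}).

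I expect the genuine obstacle to lie not in the curvature estimate---which is essentially a line-by-line adaptation of the proof of Theorem~\ref{spectrum}, specialized to constant negative curvature---but in the first step: correctly packaging Kronheimer--Mrowka's detection theorem so that it furnishes a norm-realizing spin$^c$ structure with nonvanishing reduced Floer homology, and verifying that this nonvanishing genuinely yields an \emph{irreducible} solution for the hyperbolic metric once the perturbation is removed (controlling reducibles via the non-torsion condition and invoking the appropriate compactness).
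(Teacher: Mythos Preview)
Your argument is correct and uses exactly the same analytic core as the paper: the Bochner refinement of the Weitzenb\"ock estimate applied to $\xi=\rho^{-1}(\Psi\Psi^*)_0$, followed by H\"older, together with the Kronheimer--Mrowka input that the Thurston norm is detected by monopole classes. The organization differs only mildly. The paper first proves the general identity of Proposition~\ref{new}, valid for \emph{every} metric,
\[
|\alpha|=4\pi\cdot\sup_h \|\alpha\|_{L^2(h)}/\bigl(\mathrm{vol}_h^{1/6}\|\tilde s_h\|_{L^3(h)}\bigr),
\]
via the polytope equalities $P_1=P_2=P_3$ of \cite{KMs}, and then specializes to the hyperbolic metric where $\tilde s=-4$; you instead specialize to the hyperbolic metric from the outset and pair $F_{B^t}$ directly against the harmonic representative $\omega_\phi$ by Cauchy--Schwarz. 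Your route is more direct for Theorem~\ref{hypnorm} itself, while the paper's route yields the stronger statement that the supremum in Proposition~\ref{new} is an \emph{equality}, not just an inequality. Your concern about the ``genuine obstacle'' is exactly the content of $P_3\subset P_1$ in \cite{KMs}; once that is granted, the compactness/irreducibility step you outline is standard since $c_1(\mathfrak s)$ is non-torsion (the case $\|\phi\|_{Th}=0$ being vacuous on a hyperbolic manifold).
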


The authors also show that the inequality is qualitatively sharp (see Theorem $1.5$ in \cite{BD}). Their proof relies on the theory of minimal surfaces in hyperbolic three-manifolds, and in particular an idea of Uhlenbeck \cite{Uhl}. The authors also point out that a weaker inequality is a direct consequence of the main result of \cite{KMs}, which asserts that for a closed oriented irreducible three-manifold $Y$, for any class $\alpha\in H^2(Y;\mathbb{R})$ the identity
\begin{equation}\label{sup}
|\alpha|=4\pi\cdot\mathrm{sup}_h \|\alpha\|_{L^2(h)}/\|s_h\|_{L^2(h)}
\end{equation}
holds. Here $|\cdot|$ denotes the dual Thurston norm, $h$ varies along all Riemannian metrics on $Y$ and $s_h$ is the scalar curvature. Passing to duals, this is equivalent to the fact that for a given $\phi\in H^1(Y;\mathbb{R})$ we have the identity
\begin{equation*}
4\pi \|\phi\|_{Th}=\mathrm{inf}_h \|\phi\|_{L^2(h)}\|s_h\|_{L^2(h)}.
\end{equation*}
In our case of interest, by taking $Y$ to be a the hyperbolic metric, which has scalar curvature $-6$, we obtain the bound
\begin{equation*}
\frac{2\pi}{3\sqrt{\mathrm{vol}(Y)}}\|\cdot \|_{Th}\leq \|\cdot\|_{L^2},
\end{equation*}
which is weaker than the one in Theorem \ref{hypnorm}. Using the same estimates we introduced above, we can prove the following.

\begin{prop}\label{new}
For a closed oriented irreducible three-manifold $Y$, for any class $\alpha\in H^2(Y;\mathbb{R})$ the identity
\begin{equation}\label{sup1}
|\alpha|=4\pi\cdot\mathrm{sup}_h \|\alpha\|_{L^2(h)}/\left(\mathrm{vol}_h^{1/6}\|\tilde{s}_h\|_{L^3(h)}\right).
\end{equation}
holds.
\end{prop}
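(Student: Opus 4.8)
The plan is to follow Kronheimer--Mrowka's proof of the identity (\ref{sup}) essentially verbatim, changing only the a priori curvature estimate for Seiberg--Witten solutions, which we sharpen using the Bochner refinement of Theorem \ref{spectrum}. So I would first record the improved estimate. Let $\mathfrak{s}$ be a spin$^c$ structure whose first Chern class is a monopole class, so that solutions persist for every metric $h$ and admissible perturbation, and let $(B,\Psi)$ be such a solution. Starting from the integrated Weitzenb\"ock identity (\ref{green}) and applying the Bochner trick exactly as in the proof of Theorem \ref{spectrum} to the coclosed $1$-form $\xi=\rho^{-1}(\Psi\Psi^*)_0$ --- but now simply discarding the nonnegative term $\int|(d+d^*)\xi|^2$, since with $b_1>0$ there is no eigenvalue gap to exploit and none is needed --- I would obtain $\int|\Psi|^6+\tfrac12\tilde{s}|\Psi|^4\le 0$.

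Next I would run a H\"older chain on this inequality. From $\int|\Psi|^6\le\tfrac12\int(-\tilde s)|\Psi|^4\le\tfrac12\|\tilde s\|_{L^3}(\int|\Psi|^6)^{2/3}$ one gets $\int|\Psi|^6\le\tfrac18\|\tilde s\|_{L^3}^3$, and then $\int|\Psi|^4\le\mathrm{vol}_h^{1/3}(\int|\Psi|^6)^{2/3}\le\tfrac14\mathrm{vol}_h^{1/3}\|\tilde s\|_{L^3}^2$. Since $\rho$ is an isometry, the first Seiberg--Witten equation gives $|F_{B^t}|=|\Psi|^2$, hence $\|F_{B^t}\|_{L^2}\le\tfrac12\mathrm{vol}_h^{1/6}\|\tilde s_h\|_{L^3(h)}$; as the harmonic representative minimizes the $L^2$ norm in its cohomology class and $c_1(\mathfrak s)$ is represented by $\tfrac{i}{2\pi}F_{B^t}$, this yields $\|c_1(\mathfrak s)\|_{L^2(h)}\le\tfrac{1}{4\pi}\mathrm{vol}_h^{1/6}\|\tilde s_h\|_{L^3(h)}$ for every monopole class and every metric. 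From here the passage to an arbitrary class $\alpha$ is identical to Kronheimer--Mrowka's reduction for (\ref{sup}): using that the dual Thurston unit ball is the convex hull of the basic classes $c_1(\mathfrak s)$ for irreducible $Y$, I would write $\alpha/|\alpha|$ as a convex combination of such classes and apply the triangle inequality for $\|\cdot\|_{L^2(h)}$ to conclude $4\pi\|\alpha\|_{L^2(h)}\le|\alpha|\,\mathrm{vol}_h^{1/6}\|\tilde s_h\|_{L^3(h)}$ for all $h$. This is the ``$\le$'' half of (\ref{sup1}).

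For the reverse inequality I would not reprove sharpness from scratch but reuse the near-extremal metrics from the proof of (\ref{sup}). These degenerate along a Thurston-norm-minimizing surface $\Sigma$, developing a long product neck isometric to $([-L,L]\times\Sigma,\,dt^2+g_\Sigma)$ with $g_\Sigma$ of constant negative curvature $K$, and with $L\to\infty$. The point I would stress is that on such a neck the Ricci eigenvalues are $(K,K,0)$, so the largest eigenvalue $m$ vanishes and $\tilde s=s=2K$ is constant; consequently H\"older's inequality $\mathrm{vol}^{1/6}\|\cdot\|_{L^3}\ge\|\cdot\|_{L^2}$ becomes an asymptotic equality on the region carrying all the curvature, and a direct computation shows that both $\mathrm{vol}_h^{1/6}\|\tilde s_h\|_{L^3(h)}$ and $\|s_h\|_{L^2(h)}$ have leading order $2|K|\,(2L\,\mathrm{Area}(\Sigma))^{1/2}$. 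Since along this sequence $\|\alpha\|_{L^2(h)}/\|s_h\|_{L^2(h)}\to|\alpha|/4\pi$ by (\ref{sup}), the same limit holds with the new denominator, giving ``$\ge$'' and hence equality.

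I expect the main obstacle to be precisely this sharpness step: it requires knowing that the extremal metrics for (\ref{sup}) can be taken with a genuine constant-curvature product neck and with negligible contribution from the capping regions, so that simultaneously $m\to 0$ and $s$ becomes constant where the curvature concentrates. This is exactly the calibration forcing the exponents $L^3$ and $\mathrm{vol}^{1/6}$ in (\ref{sup1}). It is worth noting that the refinement from $s$ to $\tilde s$ buys nothing on the extremal neck, where the two agree, but is genuinely stronger on a fixed metric such as the hyperbolic one, where $\tilde s=-4<s=-6$ and (\ref{sup1}) recovers the sharp Brock--Dunfield constant $\pi/\sqrt{\mathrm{vol}(Y)}$ in place of the weaker $2\pi/3\sqrt{\mathrm{vol}(Y)}$ coming from (\ref{sup}).
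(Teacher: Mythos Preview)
Your proposal is correct and follows essentially the same approach as the paper: the same Bochner-refined Seiberg--Witten estimate yielding $\int|\Psi|^6+\tfrac12\tilde s|\Psi|^4\le 0$, the same H\"older chain, and the same neck-stretching for sharpness relying on the key observation that $\tilde s=s$ on a product neck. The only cosmetic differences are that the paper packages the two inequalities as polytope containments $P_1\subset P_2\subset P_3$ (then closes with $P_3\subset P_1$ from \cite{KMs}) and computes the neck asymptotics directly rather than routing through (\ref{sup}); also, in your final parenthetical the inequality should read $\tilde s=-4> s=-6$.
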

Passing to duals, we obtain as above
\begin{equation*}
4\pi \cdot\|\phi\|_{Th}=\mathrm{inf}_h \|\phi\|_{L^2(h)}\left(\mathrm{vol}_h^{1/6}\|\tilde{s}_h\|_{L^3(h)}\right)
\end{equation*}
so that we obtain Theorem \ref{hypnorm}, as for a hyperbolic metric $\tilde{s}=-4$. While the proof in \cite{BD} uses extensively the sectional curvatures, our approach only relies on the Ricci curvature.

\begin{proof}[Proof of Proposition \ref{new}]
Following \cite{KMs}, define the polytopes in the cohomology
\begin{align*}
P_1&=\{\text{convex hull of monopole classes}\}\\
P_2&=\{\text{unit ball of }4\pi\cdot\mathrm{sup}_h \|\alpha\|_{L^2(h)}/(\mathrm{vol}_h^{1/6}\|\tilde{s}_h\|_{L^3(h)})\}\\
P_3&=\{\text{unit ball of the dual Thurston norm}\}.
\end{align*}
We claim that $P_1\subset P_2\subset P_3$: the key result of \cite{KMs} (which builds on deep work of Gabai, Thurston, Eliashberg and Taubes) shows that $P_3\subset P_1$, so that these three polytopes coincide and the result follows. Consider the first containment $P_1\subset P_2$. We proceed as in the proof of Theorem \ref{spectrum}, the only difference being that as $b_1(Y)>0$ the inequality (\ref{ineqlambda}) does not hold. Nevertheless, the quantity on its left hand side is still clearly non-negative, so that the inequality
\begin{equation*}
\int|\Psi|^6+\frac{1}{2}\tilde{s}|\Psi|^4\leq 0
\end{equation*}
holds. By applying H\"older inequality we obtain
\begin{equation*}
\int|\Psi|^6\leq -\frac{1}{2}\tilde{s}|\Psi|^4\leq \left(\int|\tilde{s}/2|^3\right)^{1/3}\left(\int|\Psi|^6\right)^{2/3}
\end{equation*}
so that (as $\Psi\neq 0$) we have
\begin{equation*}
\int|\tilde{s}/2|^3\geq \int|\Psi|^6.
\end{equation*}
Again H\"older inequality and the Seiberg-Witten equations imply that
\begin{equation*}
\mathrm{vol}(Y)^{1/3}\left(\int|\Psi|^6\right)^{2/3}\geq \int|\Psi|^4=\int |F_{B^t}|^2,
\end{equation*}
so that putting the pieces together
\begin{equation*}
\frac{1}{4}\mathrm{vol}(Y)^{1/3}\|\tilde{s}^2\|^2_{L^3(h)}\geq  \int |F_{B^t}|^2\geq 4\pi^2 \|\alpha\|^2_{L^2(h)}
\end{equation*}
as $F_{B^t}$ represents the class $-2\pi i\alpha$.
\par
Finally, the proof of the cointainment $P_2\subset P_3$ is obtained as in \cite{KMs} by the following neck stretching argument. Consider an embedded surface of genus at least $2$, and suppose the metric $h$ has a neck of the form $\Sigma\times[0,R]$ where $\Sigma$ has constant negative curvature and unit area, and is independent of $R$ outside a the neck. On the neck, as the metric is a product, we have $\tilde{s}=s$, so that
\begin{equation*}
\|\tilde{s}_h\|_{L^3(h)}= 4\pi(2g-2)R^{1/3}+O(1),\quad \mathrm{vol}_h=R+O(1).
\end{equation*}
Th argument then is concluded as in \cite{KMs}: any $2$-form $\omega$ representing $\alpha$ must satisfy
\begin{equation*}
\|\omega\|_h\geq R^{1/2}\langle \alpha,[\Sigma]\rangle,
\end{equation*}
hence 
\begin{equation*}
\langle \alpha,[\Sigma]\rangle/(2g-2)\leq\mathrm{sup}_h  \|\alpha\|_{L^2(h)}/\left(\mathrm{vol}_h^{1/6}\|\tilde{s}_h\|_{L^3(h)}\right).
\end{equation*}
The case of an embedded torus is handled similarly.
\end{proof}

\vspace{0.3cm}
We conclude by discussing the extremal cases. In \cite{IY} it is shown that a metric for which the identity (\ref{sup}) is realized is very constrained: among the others, the geometry is forced to be $\mathbb{R}\times\mathbb{H}$, so that in particular $Y$ is Seifert. This follows from the fact that the curvature $F_{B^t}$ is parallel. In our case, we can instead conclude if a manifold $Y$ admits a metric for which the identity (\ref{sup}) is realized must fiber over the circle. Indeed, if we are in the extremal case, inequality (\ref{boch}) implies that the curvature $F_{B^t}$ is harmonic, so that in particular $\ast F_{B^t}$ is a closed form. Inequality (\ref{green}) implies that $|\Psi|$ is a non-zero constant, so that the by the Seiberg-Witten equations $\ast F_{B^t}$ is never zero. As $\ast F_{B^t}$ is (up to constants) the Poincar\'e dual to an integral form, by integrating it we obtain the required fibration to $S^1$.

\vspace{0.5cm}
\textit{Acknowledgements. }The author would like to thank Otis Chodosh, Tom Mrowka and Peter Sarnak for some nice conversations. This work was supported by the the Shing-Shen Chern Membership Fund and the IAS Fund for Math.

\vspace{0.3cm}
\bibliographystyle{alpha}
\bibliography{biblio}

\end{document}